\documentclass{article}

\usepackage[utf8]{inputenc} % allow utf-8 input
\usepackage[T1]{fontenc}    % use 8-bit T1 fonts
\usepackage{hyperref}       % hyperlinks
\usepackage{url}            % simple URL typesetting
\usepackage{booktabs}       % professional-quality tables
\usepackage{amsfonts}       % blackboard math symbols
\usepackage{nicefrac}       % compact symbols for 1/2, etc.
\usepackage{microtype}      % microtypography
\usepackage{lipsum}
\usepackage{graphicx}
\usepackage{amsmath}
\usepackage{amssymb}
\usepackage{amsthm}
\usepackage{color}
\graphicspath{ {./images/} }
\newtheorem{theorem}{Theorem}

\newtheorem{corollary}{Corollary}

\newtheorem{definition}{Definition}
\newtheorem{example}{Example}

\newtheorem{lemma}{Lemma}

\newtheorem{proposition}{Proposition}

\usepackage{verbatim}

\title{Algebraic Aspects of Combined Matrices}

\author{ Primitivo B. Acosta-Hum\'anez\footnote{ \texttt{pacosta-humanez@uasd.edu.do}} ,  Randy Leonardo\footnote{ \texttt{rleonardo36@uasd.edu.do}}  and  M\'aximo Santana\footnote{ \texttt{msantana22@uasd.edu.do - Corresponding author} }\\ 
   \small{Instituto de Matem\'atica, Universidad Aut\'onoma de Santo Domingo, Dominican Republic}}
   \date{}
%  \texttt{pacosta-humanez@uasd.edu.do}\\
  %%% examples of more authors
   %\And
 %Randy Leonardo\\
%Instituto de Matem\'atica\\
%Universidad Aut\'onoma de Santo Domingo \\
 % Dominican Republic \\
  %\texttt{rleonardo36@uasd.edu.do} \\
  
  %% \AND
  %% Coauthor \\
  %% Affiliation \\
  %% Address \\
  %% \texttt{email} \\
  %% \And
  %% Coauthor \\
  %% Affiliation \\
  %% Address \\
  %% \texttt{email} \\
  %% \And
  %% Coauthor \\
  %% Affiliation \\
  %% Address \\
  %% \texttt{email} \\
  %% examples of more authors
   %\And
 %M\'aximo Santana\\
%Instituto de Matem\'atica\\
%Universidad Aut\'onoma de Santo Domingo \\
 % Dominican Republic \\
  %\texttt{msantana22@uasd.edu.do} \\
%}

\begin{document}
\maketitle

\begin{abstract}
In this work, we present algebraic results concerning the combined matrices $\mathcal{C}(A)$, where the entries of $A$ belong to a number field $K$ and $A$ is a non-singular matrix. In other words, $A$ is a $n\times n$ matrix belonging to the General Linear Group over $K$, denoted by $\mathrm{GL}_n(K)$. We also analyze the case in which matrix $A$ belongs to algebraic subgroups of $\mathrm{GL}_n(K)$, such as the unimodular group, where $A^2$ is a $n\times n$ matrix belonging to the Special Linear Group, denoted by $\mathrm{SL}_n(K)$, triangular groups, diagonal groups, among others. In particular, we thouroughly examine the cases $n=2$ and $n=3$ for symmetric and non-symmetric matrices, providing explicit diagonalization of $\mathcal{C}(A)$, which includes characteristic polynomials with their eigenvalues and eigenfactors.\medskip

\noindent\textbf{Keywords and Phrases}\emph{Characteristic Polynomials,  Combined Matrices,  Galois Groups, General Linear Group}
\end{abstract}

\section{Introduction}

The study of combined matrices has garnered significant interest among researchers in recent years. Recently, the third author, in collaboration with researchers from Spain, published new results on combined matrices (see\cite{bggss23,bggs16,gggss21} and references therein). Previous works related to combined matrices are provided in references \cite{aps19, bggs14,bgs22,f17,fm11}.

The combined matrix of a non-singular matrix \(A\) is defined as \(C(A) = A \circ A^{-T}\), where \(\circ\) denotes the Hadamard product of matrices. Specifically, we are interested in cases where the elements of the matrix \(A\) are rational numbers. Thus, \(A\) is an \(n \times n\) matrix belonging to the General Linear Group over \(\mathbb{Q}\), denoted by \(GL_n(\mathbb{Q})\). We also analyze the case in which the determinant of \(A\) is \(\pm1\) and the entries of \(A\) are integer numbers, i.e., \(A^2\) is an \(n \times n\) matrix belonging to the Special Linear Group \(SL_n(\mathbb{Z})\).

We provide results concerning eigenvalues, eigenvectors, and Galois groups related to combined matrices, i.e., we explore the algebraic properties of combined matrices \(C(A)\) that include the set of eigenvalues and eigenvectors of \(C(A)\) and the Galois group of its characteristic polynomial. We recall that the trace of a square matrix \(A\) is denoted by \(\text{Tr}(A)\) and its determinant by \(\det A\). A matrix \(A\) is singular if \(\det A = 0\); otherwise, it is called non-singular. The canonical basis of \(\mathbb{R}^n\) is denoted by \(\mathbf{B} = \{ e_1, e_2, \ldots, e_n \}\), where \(e_1 = (1, 0, \ldots, 0)\), \(e_2 = (0, 1, \ldots, 0)\), \ldots, \(e_n = (0, 0, \ldots, 1)\).

Now we set the theoretical background necessary to understand the rest of the paper. 
\begin{definition}Let \(A\) be an \(n \times n\) square matrix. The characteristic polynomial of \(A\), denoted by \(p_A(\lambda)\), is the polynomial of degree at most \(n\) obtained as \(p_A(\lambda) = \det(A - \lambda I_n)\), where \(I_n\) is the \(n \times n\) identity matrix. The roots of the characteristic polynomials are the set of eigenvalues of \(A\), denoted by \(\Lambda(A) := \{ \lambda \in \mathbb{C} : p_A(\lambda) = 0 \}\). The set of eigenvectors of \(A\) is denoted by \(\mathcal{V}(A)\). Minors of \(A\) are given by \(A_{ij}\), and cofactors of \(A\) are given by \((-1)^{i+j}A_{ij}\), where \(A_{ij}\) is the determinant of the submatrix of \(A\) obtained by deleting the \(i\)-th row and \(j\)-th column.
\end{definition}
According to \cite{caa13} we give the following definitions.
\begin{definition}
Consider a set $G$ and an operation $(\cdot)$. The pair $(G,\cdot)$ is a group if the following is satisfied
\begin{enumerate}
    \item Closure property: $\forall a,b\in G, \,\, a\cdot b\in G$
    \item Associative property: $\forall a,b,c\in G, \,\, a\cdot (b\cdot c)=(a\cdot b)\cdot c$
    \item Identity element: $\exists e\in G$ such that $\forall a\in G, \,\,a\cdot e=e\cdot a=a$
    \item Inverse element: $\forall a\in G,\,\, \exists b\in G$ such that $a\cdot b=b\cdot a=e$
\end{enumerate}
If \((G, \cdot)\) satisfies the closure property, we say it is a Magma. If \((G, \cdot)\) is a Magma and satisfies the associative property, we say it is a Semigroup. If \((G, \cdot)\) is a Semigroup and satisfies the identity element property, we say it is a Monoid. If \((G, \cdot)\) is a Monoid and satisfies the inverse element property, we say it is a Group. If \((G, \cdot)\) is a Group and satisfies the commutative property (\(\forall a, b \in G, \ a \cdot b = b \cdot a\)), we say it is an Abelian Group. By \(S_n\), we denote the symmetric group of order \(n!\).

\end{definition}
\begin{definition}
An algebraic number field, also called a number field, is a finite degree field extension of the field of rational numbers \(\mathbb{Q}\), where the degree field extension refers to the dimension of the field as a vector space over \(\mathbb{Q}\).
\end{definition}
\begin{definition}
Consider the number fields $(K,+,\cdot)$ and  $(L,+,\cdot)$. The application $f:\,\,K \longrightarrow L$ is:
\begin{enumerate}
    \item Homomorphism whether $f(a+b)=f(a)+f(b)$ and $f(a\cdot b)=f(a)\cdot f(b)$ for all $a,b\in K$.
    \item Monomorphism whether $f$ is Homomorphism and it is injective.
    \item Epimorphism whether $f$ is Homomorphism and it is surjective.
    \item Isomorphism whether $f$ is Monomorphism and Epimorphism.
    \item Endomorphism whether $f$ is Homomorphism and $K=L$.
    \item Automorphism whether $f$ is Isomorphism and Endomorphism.
\end{enumerate}
\end{definition}
\begin{definition}
Consider a polynomial $p\in K[x]$, where $K$ is a field. Let $L/K$ be the extension of $L$ over $K$. The automorphism $\sigma:\,\, L/K\longrightarrow L/K$ is called $K$-automorphism of $L$ whether $\sigma|_K=e$, i.e., $\sigma$ restricted to the elements of $K$ is the identity group. The Galois group $\mathrm{Gal}(L/K)$ is the group of $K$-automorphisms of $L$.
\end{definition}
According to \cite{ac16} the following are the definitions of Reversing of matrices adapted to $n\times n$ matrices.
\begin{definition}
Let $A$ be a $n\times n$ matrix, $\delta_{i,j}=0$ for all $i\neq j$ and $\delta_{i,i}=1$. Reversing of $A$ is given by:
\begin{eqnarray*}\label{drm}
    \mathcal{R}(A)=M_{\mathcal{R}_c}AM_{\mathcal{R}_r},\\ M_{\mathcal{R}_r}=
    [\delta_{i,n-j+1}]_{n\times n},\\  M_{\mathcal{R}_c}=[\delta_{n-i+1,j}]_{n\times n},\\ M_{\mathcal{R}_c}\cdot M_{\mathcal{R}_r}=M_{\mathcal{R}_r}\cdot M_{\mathcal{R}_c}=I_n.
    \end{eqnarray*}
\end{definition}

The following is the definition of combined matrices.
\begin{definition}\label{defmc}
Let $A$ be a non-singular matrix. The combined matrix of $A$ is 
\begin{equation}\label{eq:dcm}
\mathcal{C}(A)=A\circ A^{-T}=A\circ (A^{-1})^T,
\end{equation}
where in general $B\circ C$ is the Hadamard (entrywise) product of $B$ and $C$.
\end{definition}
\section{Results of combined matrices}
In this section we consider results concerning algebraic aspects of combined matrices of $n\times n$ matrices. Propositions \ref{prop1}, \ref{prop2}, \ref{prop3} and \ref{prop4} are immediate consequences of Definition \ref{defmc} for $n\in\{2,3\}$ and direct computations regarding properties studied in a basic course of linear algebra for undergraduate level.
\subsection{Combined Matrices in $GL(2,K)$}

Here we analyze some algebraic aspects of characteristic polynomials of non-singular matrices of size $2\times 2$. Consider $$A=\begin{pmatrix}a_{11}&a_{12}\\a_{21}&a_{22}\end{pmatrix},\quad \mathrm{Tr}(A)=a_{11}+a_{22},\quad \det A= a_{11}a_{22}-a_{12}a_{21}.$$
The combined matrix of the non-singular matrix $A$ is 
\begin{equation}\label{eq:cm2}
\mathcal{C}(A)=\frac{1}{\det A}\begin{pmatrix}a_{11}a_{22}&-a_{12}a_{21}\\-a_{12}a_{21}&a_{11}a_{22}\end{pmatrix}\end{equation}

\begin{proposition}\label{prop1}
Consider $A\in \mathrm{GL}_2(K)$, being $K$ a number field. The following statements hold
\begin{enumerate}
    \item $\mathrm{Tr}(\mathcal{C}(A))=\frac{2}{\mathrm{det}A}a_{11}a_{22}$
    \item $\mathrm{det}(\mathcal{C}(A))=\frac{1}{\mathrm{det}(A)}(a_{11}a_{22}+a_{12}a_{21})$
    \item The characteristic polynomial of $\mathcal{C}(A)$ is 
$$\lambda^2 - \left(\frac{2a_{11}a_{22}}{a_{11}a_{22} - a_{12}a_{21}}\right)\lambda + \frac{a_{11}a_{22}+a_{12}a_{21}}{a_{11}a_{22} - a_{12}a_{21}}$$
\item The eigenvalues of $\mathcal{C}(A)$ are given by $$\Lambda=\{1,\det\mathcal{C}(A)\}$$
\item The eigenvectors of $\mathcal{C}(A)$ are given by $$\left\{\begin{pmatrix}1\\1\end{pmatrix},\,\, \begin{pmatrix}1\\-1\end{pmatrix}\right\}$$
\item The Galois group $\mathrm{Gal}(L/K)$ of the characteristic polynomial is the identity group.
\item The combined matrix of $A$ is orthogonal diagonalizable: $$\mathcal{C}(A)=PDP^{T},\quad P=\frac{1}{\sqrt {2}}\begin{pmatrix}1 & 1\\ 1 & -1 \end{pmatrix},\quad D=\begin{pmatrix}1&0\\0&\det (\mathcal{C}(A))\end{pmatrix},\quad P^{T}=\frac{1}{\sqrt{2}}\begin{pmatrix}1&1\\1&-1\end{pmatrix}$$ Moreover, if $f$ is an analytic function in $1$ and in $\det (\mathcal{C}(A))$, then $$f(\mathcal{C}(A))=\frac{1}{2}\begin{pmatrix}1&1\\1&-1\end{pmatrix}\begin{pmatrix}f(1)&0\\0&f(\det (\mathcal{C}(A)))\end{pmatrix}\begin{pmatrix}1&1\\1&-1\end{pmatrix}$$
\end{enumerate}
\end{proposition}

\begin{proposition}\label{prop2}
Consider $A$ such that $A^2\in \mathrm{SL}_2(K)$, being $K$ a number field. The following statements hold
\begin{enumerate}
    \item $\mathrm{Tr}(\mathcal{C}(A))=\pm 2a_{12}a_{21}+2$
    \item $\mathrm{det}(\mathcal{C}(A))=\mathrm{Tr}(\mathcal{C}(A))-1$
    \item The characteristic polynomial of $\mathcal{C}(A)$ is 
$$p_{\mathcal{C}(A)}(\lambda)=\lambda^2 - \left(\pm 2a_{12}a_{21}+2\right)\lambda \pm 2a_{12}a_{21}+1$$
\item The eigenvalues of $\mathcal{C}(A)$ are given by $$\Lambda=\{1,\pm 2a_{12}a_{21}+1\}$$
\item The eigenvectors of $\mathcal{C}(A)$ are given by $$\left\{\begin{pmatrix}1\\1\end{pmatrix},\,\, \begin{pmatrix}1\\-1\end{pmatrix}\right\}$$
\item The Galois group of the characteristic polynomial  $\mathrm{Gal}(L/K)$ is the identity group.
\item The combined matrix of $A$ is orthogonal diagonalizable: $$\mathcal{C}(A)=PDP^{T},\quad P=\frac{1}{\sqrt{2}}\begin{pmatrix}1&1\\1&-1\end{pmatrix},\quad D=\begin{pmatrix}1&0\\0&\pm 2a_{12}a_{21}+1\end{pmatrix},\quad P^{T}=\frac{1}{\sqrt{2}}\begin{pmatrix}1&1\\1&-1\end{pmatrix}$$ Moreover, if $f$ is an analytic function in $1$ and in $\pm 2a_{12}a_{21}+1$, then $$f(\mathcal{C}(A))=\frac{1}{2}\begin{pmatrix}1&1\\1&-1\end{pmatrix}\begin{pmatrix}f(1)&0\\0&f(\pm 2a_{12}a_{21}+1\end{pmatrix}\begin{pmatrix}1&1\\1&-1\end{pmatrix}$$
\end{enumerate}

\end{proposition}

\subsection{Combined Matrices in $GL(3,K)$}

Consider the matrix $A\in \mathrm{GL}_3(K)$, being $K$ a number field, given by
$$
A=\begin{pmatrix}a_{11}&a_{12}&a_{13}\\
a_{21}&a_{22}&a_{23}\\a_{31}&a_{32}&a_{33}\end{pmatrix}$$
The determinant of $A$, denoted by $\det A$, is
$$\det A=a_{11}A_{11}+a_{12}A_{12}+a_{13}A_{13}.$$
The combined matrix of $A$ is
$$\mathcal{C}(A)=\frac{1}{\det A}\begin{pmatrix}a_{11}A_{11}& a_{12}A_{12}& a_{13}A_{13}\\ a_{21}A_{21}& a_{22}A_{22}&a_{23}A_{23}\\a_{31}A_{31}& a_{32}A_{32}& a_{33}A_{33}\end{pmatrix}$$

\begin{proposition}\label{prop3}
  Consider $A\in \mathrm{GL}_3(K)$, being $K$ a number field. The following statements hold
\begin{enumerate}
    \item $\mathrm{Tr}(\mathcal{C}(A))=1+\dfrac{2a_{11}a_{22}a_{33}-a_{12}a_{23}a_{31}-a_{13}a_{32}a_{21}}{\mathrm{det}(A)}$
    
    \item $\mathrm{Tr}(\mathcal{C}(A))=3-\dfrac{1}{\mathrm{det}(A)}\left[\displaystyle{\sum_{i=1}^{3}\sum_{j\neq i}^{3}a_{ij}A_{ij}}\right]$
    
     \item $\mathrm{det}(\mathcal{C}(A))= 1+\mathrm{Tr}(\mathcal{C}_{ij}(A))-\mathrm{Tr}(\mathcal{C}(A))$, where $\mathcal{C}_{ij}(A)$ is the cofactors matrix of $\mathcal{C}(A)$
   
    \item The characteristic polynomial of $\mathcal{C}(A)$ is 
$$\lambda^3-\mathrm{Tr}(\mathcal{C}(A))\lambda^2 + \left[\mathrm{Tr}(\mathcal{C}(A))+\mathrm{det}(\mathcal{C}(A))-1\right]\lambda + \mathrm{det}(\mathcal{C}(A))$$

\item The eigenvalues of $\mathcal{C}(A)$ are given by $$\Lambda=\{1,\lambda_{2}, \lambda_{3}\}$$
\\
where $\lambda_{2}, \lambda_{3}$ are solutions of $$\lambda^2 + \left[1-\mathrm{Tr}(\mathcal{C}(A))\right]\lambda + \mathrm{det}(\mathcal{C}(A))$$

\item A basis of eigenvectors of $\mathcal{C}(A)$ are given by $r_1$, $r_2$ and $r_3$, where $$r_{1}=\begin{pmatrix}a_{11}A_{11}-det(A)\\a_{11}A_{11}-det(A)\\a_{11}A_{11}-det(A)\end{pmatrix},r_{2}=\begin{pmatrix}a_{22}A_{22}-det(A)\lambda_2\\a_{22}A_{22}-det(A)\\a_{22}A_{22}-det(A)\lambda_2\end{pmatrix},r_{3}=\begin{pmatrix}a_{33}A_{33}-det(A)\lambda_3\\a_{33}A_{33}-det(A)\lambda_3\\a_{33}A_{33}-det(A)\end{pmatrix}$$

\end{enumerate}  
\end{proposition}

\begin{proposition}\label{prop4}
  Consider $A\in \mathrm{GL}_3(\mathbb R)$, a symmetric matrix. The following statements hold
\begin{enumerate}
 
\item $\mathrm{Tr}(\mathcal{C}(A))=3-\dfrac{2}{\mathrm{det}(A)}\left[\displaystyle{\sum_{i=1}^{2}\sum_{j=i+1}^{3}a_{ij}A_{ij}}\right]$

\item If the eigenvalues of $\mathcal{C}(A)$ given by $$\Lambda=\{1,\lambda_{2}, \lambda_{3}\}$$
\\
where $\lambda_{2}, \lambda_{3}$ are solutions of $$\lambda^2 + \left[1-\mathrm{Tr}(\mathcal{C}(A))\right]\lambda + \mathrm{det}(\mathcal{C}(A))$$ \\
are different, so the eigenvectors of $\mathcal{C}(A)$  given by $$\left\{r_{1},r_{2},r_{3}\right\}$$\\
they are orthogonal.

\item The combined matrix of $A$ is orthogonal diagonalizable: $$\mathcal{C}(A)=PDP^{T},\quad P=\begin{pmatrix}l_{1}&l_{2}&l_{3}\end{pmatrix}, l_{n}=\frac{r_{n}}{||r_{n}||}, n=1,2,3.$$\\
and $$D=\begin{pmatrix}1&0&0\\0&\lambda_{2} &0\\0&0&\lambda_{3}\end{pmatrix}$$

Moreover, if $f$ is an analytic function in $$\{1,\lambda_{2},\lambda_{3}\}$$ then $$f(\mathcal{C}(A))=P\begin{pmatrix}f(1)&0&0\\0&f(\lambda_{2})&0\\ 0&0&f( \lambda_{3})\end{pmatrix}P^{T}$$

\end{enumerate}  
\end{proposition}

\begin{corollary}
 Consider $A\in \mathrm{GL}_3(\mathbb Q)$, a symmetric matrix. If \ $\mathrm{Tr}(\mathcal{C}_{ij}(A))=-1$, then the following statements hold.  
 
\begin{enumerate} 
\item $\mathrm{Tr}(\mathcal{C}(A))=-\mathrm{det}(\mathcal{C}(A))$

\item The eigenvalues of $\mathcal{C}(A)$ given by $$\Lambda=\{1,-1,-\mathrm{det}(\mathcal{C}(A))\}$$
\item The Galois group of the characteristic polynomial  $\mathrm{Gal}(L/\mathbb Q)$ is the identity group.

\end{enumerate}  
\end{corollary}

\subsection{Combined Matrices in $GL(n,K)$}

Consider the matrix $A\in \mathrm{GL}_n(K)$, being $K$ a field number, given by
$$
A=\begin{pmatrix}a_{11}&a_{12}&...&a_{1n}\\
a_{21}&a_{22}&...&a_{2n}\\\vdots&\vdots&\vdots&\vdots&\\a_{n1}&a_{n2}&...&a_{nn}\end{pmatrix}$$
The combined matrix of $A$, is 
$$\mathcal{C}(A)=\frac{1}{\det A}\begin{pmatrix}a_{11}A_{11}& a_{12}A_{12}&...&a_{1n}A_{1n} \\ a_{21}A_{21}& a_{22}A_{22}&...&a_{2n}A_{2n}\\\vdots&\vdots&\vdots&\vdots&\\a_{n1}A_{n1}&a_{n2}A_{n2}&...&a_{nn}A_{nn}\end{pmatrix}$$

\begin{proposition}\label{randy}
  Consider $A\in \mathrm{GL}_n(K)$, being $K$ a number field. The following statements hold
\begin{enumerate}
    
    \item $\mathrm{Tr}(\mathcal{C}(A))=n-\dfrac{1}{\mathrm{det}(A)}\left[\displaystyle{\sum_{i=1}^{n}\sum_{j\neq i}^{n}a_{ij}A_{ij}}\right]$
    
     \item $\mathrm{det}(\mathcal{C}(A))= \displaystyle{\sum_{k=1}^{n}(-1)^{k+1}Tr(C^{(k)})}$,

where $C^{(k)}$ is the cofactor matrix of $k$ order of  $\mathcal{C}(A)$
   
    \item The characteristic polynomial of $\mathcal{C}(A)$ is 
$$ P_{n}(\lambda)=\sum_{k=0}^{n}(-1)^{k}\lambda^{k}Tr(C^{(k)})$$
 where $C^{(k)}$ is the cofactor matrix of k order of  $\mathcal{C}(A)$, $Tr(C^{(0)})=det(\mathcal{C}(A))$ and $Tr(C^{(n)})=1$

\item The eigenvalues of $\mathcal{C}(A)$ are given by $$\Lambda=\{1,\lambda_{2},..., \lambda_{n}\}$$
where $\lambda_{2},..., \lambda_{n}$ are solutions of the polinomial $$ Q_{n-1}(\lambda)=\sum_{r=1}^{n}\sum_{k=0}^{r-1}(-1)^{k}Tr(C^{(n-k)})\lambda^{n-r}$$

\item The eigenvectors of $\mathcal{C}(A)$ are given by $$\left\{r_{1}=\begin{pmatrix}a_{11}A_{11}-det(A)\\a_{11}A_{11}-det(A)\\\vdots\\a_{11}A_{11}-det(A)\end{pmatrix},r_{2}=\begin{pmatrix}a_{22}A_{22}-det(A)\lambda_2\\a_{22}A_{22}-det(A)\\a_{22}A_{22}-det(A)\lambda_2\\\vdots\end{pmatrix},...,r_{n}=\begin{pmatrix}a_{nn}A_{nn}-det(A)\lambda_n\\a_{nn}A_{nn}-det(A)\lambda_n\\\vdots\\a_{nn}A_{nn}-det(A)\end{pmatrix}\right\}$$

\end{enumerate}  
\end{proposition}

\begin{proposition}\label{randy1}
  Consider $A\in \mathrm{GL}_n(\mathbb R)$, a symmetric matrix. The following statements hold
\begin{enumerate}
    
\item $\mathrm{Tr}(\mathcal{C}(A))=n-\dfrac{2}{\mathrm{det}(A)}\left[\displaystyle{\sum_{i=1}^{n-1}\sum_{j=i+1}^{n}a_{ij}A_{ij}}\right]$
   
\item If the eigenvalues of $\mathcal{C}(A)$ are given by $$\Lambda=\{1,\lambda_{2},..., \lambda_{n}\}$$
\\
where $\lambda_{2},..., \lambda_{n}$ are solutions of the polinomial $$ Q_{n-1}(\lambda)=\sum_{r=1}^{n}\sum_{k=0}^{r-1}(-1)^{k}Tr(C^{(n-k)})\lambda^{n-r}$$

are different, so the eigenvectors of $\mathcal{C}(A)$ given by $$\left\{r_{1}, r_{2},..., r_{n}\right\}$$
they are orthogonal.

\item The combined matrix of $A$ is orthogonal diagonalizable: $$\mathcal{C}(A)=PDP^{T},\quad P=\begin{pmatrix} l_{1}&l_{2}&...&l_{n}\end{pmatrix}, l_{n}=\frac{r_{n}}{||r_{n}||}, n=1,2,...$$\\
and $$D=\begin{pmatrix}1&0&...&0\\0&\lambda_{2} &...&0\\\vdots&\vdots&\vdots&\vdots&\\0&0&...&\lambda_{n}\end{pmatrix}$$ 
Moreover, if $f$ is an analytic function in $$\{1,\lambda_{2},...,\lambda_{n}\}$$ then $$f(\mathcal{C}(A))=P\begin{pmatrix}f(1)&0&...&0\\0&f(\lambda_{2}) &...&0\\\vdots&\vdots&\vdots&\vdots&\\0&0&...&f(\lambda_{n})\end{pmatrix}P^{T}$$

\end{enumerate}  
\end{proposition}

\subsection{Combined matrices over the General Linear Group}
Here we consider combined matrices over the General Linear Group. We start with the following Lemma that involves Hadamard product and Reversing of matrices.
\begin{lemma}\label{lp}
Let $A,B\in\mathrm{GL}_n(K)$, $\circ$ be the Hadamard product and let $\mathcal{R}$ be Reversing over $\mathrm{GL}_n(K)$:
$$\mathcal{R}:\,\, (\mathrm{GL}_n(K),\circ) \longrightarrow (\mathrm{GL}_n(K),\circ).$$
Then, Reversing is a group automorphism from $(\mathrm{GL}_n(K),\circ)$ to $(\mathrm{GL}_n(K),\circ)$.
\end{lemma}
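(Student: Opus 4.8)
The plan is to translate the definition of $\mathcal{R}$ in terms of the permutation matrices $M_{\mathcal{R}_r},M_{\mathcal{R}_c}$ into a plain entrywise description, after which every property needed for "group automorphism" reduces to a short index computation. First I would record that $M_{\mathcal{R}_r}=M_{\mathcal{R}_c}$ is the $n\times n$ exchange (anti-identity) matrix $E$ with $E_{ij}=\delta_{i,n-j+1}$, that $E$ is symmetric, and that $E^2=I_n$ — this last fact is precisely the relation $M_{\mathcal{R}_c}M_{\mathcal{R}_r}=M_{\mathcal{R}_r}M_{\mathcal{R}_c}=I_n$ stated in the definition of Reversing. Consequently $\mathcal{R}(A)=EAE$, and the $(i,j)$ entry of $\mathcal{R}(A)$ equals $A_{\,n-i+1,\;n-j+1}$: Reversing rotates the matrix by a half-turn about its center, permuting its entries without altering their values.

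Second, I would verify the homomorphism property for $\circ$ directly from this formula. For all $i,j$,
\[
\big(\mathcal{R}(A)\circ\mathcal{R}(B)\big)_{ij}=A_{n-i+1,n-j+1}\,B_{n-i+1,n-j+1}=(A\circ B)_{n-i+1,n-j+1}=\big(\mathcal{R}(A\circ B)\big)_{ij},
\]
hence $\mathcal{R}(A\circ B)=\mathcal{R}(A)\circ\mathcal{R}(B)$. The same entrywise viewpoint shows that $\mathcal{R}$ fixes the Hadamard identity (the all-ones matrix is invariant under the half-turn) and commutes with entrywise inversion, so $\mathcal{R}$ is a group homomorphism for $\circ$.

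Third, I would settle bijectivity and well-definedness of the codomain. Since $E^2=I_n$ we get $\mathcal{R}(\mathcal{R}(A))=E(EAE)E=E^2AE^2=A$, so $\mathcal{R}$ is an involution, hence a bijection with $\mathcal{R}^{-1}=\mathcal{R}$. For the codomain, $\mathcal{R}(A)=EAE$ gives $\det\mathcal{R}(A)=(\det E)^2\det A=\det A$ because $\det E=\pm1$; thus $\mathcal{R}(A)$ is nonsingular whenever $A$ is, and — since $\mathcal{R}$ merely permutes entries — $\mathcal{R}(A)$ also has all entries nonzero when $A$ does, so $\mathcal{R}(A)$ is invertible for the Hadamard product as well. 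Combining the three steps yields that $\mathcal{R}$ is a group automorphism of $(\mathrm{GL}_n(K),\circ)$.

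I do not expect a genuine obstacle here: the content is bookkeeping with the reversal permutation $i\mapsto n-i+1$. The one point that merits care is the identification $\mathcal{R}(A)=EAE$ acting entrywise as $A_{ij}\mapsto A_{n-i+1,n-j+1}$ — that left multiplication by $E$ reverses rows and right multiplication by $E$ reverses columns — since homomorphism, involutivity, and preservation of both invertibility conditions all follow immediately from it. (If one wishes to be scrupulous about $(\mathrm{GL}_n(K),\circ)$ being a group at all, one restricts attention to matrices with every entry nonzero and observes that $\mathcal{R}$ preserves this set; the argument above is unaffected.)
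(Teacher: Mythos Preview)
Your argument is correct and follows the same overall architecture as the paper: establish the homomorphism identity $\mathcal{R}(A\circ B)=\mathcal{R}(A)\circ\mathcal{R}(B)$, then use the involutive relation $\mathcal{R}^2=\mathrm{id}$ (equivalently $M_{\mathcal{R}_c}M_{\mathcal{R}_r}=I_n$) to obtain bijectivity. The execution differs in one respect: the paper derives the homomorphism step by inserting $I_n=M_{\mathcal{R}_c}M_{\mathcal{R}_r}$ into $M_{\mathcal{R}_c}(A\circ B)M_{\mathcal{R}_r}$ and then splitting the Hadamard product through the outer permutation matrices, whereas you first rewrite $\mathcal{R}(A)=EAE$ entrywise as $(\mathcal{R}(A))_{ij}=A_{n-i+1,\,n-j+1}$ and verify the identity coordinate by coordinate. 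Your route is slightly more elementary and makes transparent exactly why conjugation by a permutation matrix distributes over $\circ$; the paper's version is more compact but relies implicitly on that same fact. Your additional remarks---that $\det\mathcal{R}(A)=\det A$ secures membership in $\mathrm{GL}_n(K)$, and that one must restrict to matrices with nonzero entries for $(\mathrm{GL}_n(K),\circ)$ to be a genuine group---are valid refinements that the paper does not make explicit.
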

\begin{proof}
We start proving that $\mathcal{R}$ is homomorphism. Thus, $\mathcal{R}(A\circ B)=M_{\mathcal{R}_c}(A\circ B)M_{\mathcal{R}_r}$. By definition of Reversing it is satisfied $I_n=M_{\mathcal{R}_c}\cdot M_{\mathcal{R}_r}=M_{\mathcal{R}_r}\cdot M_{\mathcal{R}_c}$ we have $\mathcal{R}(A\circ B)=(M_{\mathcal{R}_c}\cdot M_{\mathcal{R}_r}\cdot M_{\mathcal{R}_c}) (A\circ B) M_{\mathcal{R}_r}=(M_{\mathcal{R}_c}A\cdot M_{\mathcal{R}_r})\circ (M_{\mathcal{R}_c} B\cdot M_{\mathcal{R}_r})$ and we conclude $\mathcal{R}(A\circ B)=\mathcal{R}(A)\circ \mathcal{R}(B)$. Now, we prove that $\mathcal{R}$ is Monomorphism. Thus, assuming  $\mathcal{R}(A)=\mathcal{R}(B)$ and applying again $\mathcal{R}$, we obtain $\mathcal{R}^2(A)=\mathcal{R}^2(B)$ and therefore $A=B$. We see that $\mathcal{R}$ is Epimorphism because $\mathcal{R}(A)\in \mathrm{GL}_n(K)$ for all $A\in \mathrm{GL}_n(K)$. In this way we proved that $\mathcal{R}$ is Isomorphism and because it is Endomorphism by definition, we conclude that $\mathcal{R}$ is a group Automorphism from $(\mathrm{GL}_n(K),\circ)$ to $(\mathrm{GL}_n(K),\circ)$.
\end{proof}
\begin{theorem}
Let $\mathcal{C}(A)$ be the combined matrix of $A\in\mathrm{GL}_n(K)$ as in Eq. \eqref{eq:dcm}. Let $L=K[\lambda_1,\ldots,\lambda_n]$ be the extension of $K$. The following statements hold
$$
\begin{array}{ll}
1.&\mathcal{C}(A)=[m_{ij}]_{n\times n},\quad m_{ij}=\displaystyle{\frac{(-1)^{i+j}}{\mathrm{det}A}a_{ij}A_{ij}} (arriba)\\
2.&\mathrm{Tr}(\mathcal{C}(A))=\frac{1}{\det A}\displaystyle{\sum_{k=1}^na_{ii}A_{ii}}\\
3.& (1,v_1)\in \Lambda(\mathcal{C}(A))\times \mathcal{V}(\mathcal{C}(A)),\,\, v_1=\displaystyle{\sum_{k=1}^ne_k}\\
4. & \mathrm{Gal}(L/K)\leq G,\quad G\cong S_{n-1}\\
5. & \mathcal{C}(\mathcal{R}(A))=\mathcal{R}(\mathcal{C}(A))
\end{array}
$$
\end{theorem}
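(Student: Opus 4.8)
The plan is to handle parts 1--3 by a direct computation with the adjugate formula, part~4 by a standard Galois-action-on-roots argument, and part~5 by reducing the reversing operator to conjugation by the exchange matrix and then invoking Lemma~\ref{lp}.

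For part~1 I would start from $A^{-1}=\tfrac{1}{\det A}\,\mathrm{adj}(A)$ with $\mathrm{adj}(A)_{ij}=(-1)^{i+j}A_{ji}$; hence $(A^{-T})_{ij}=(A^{-1})_{ji}=\tfrac{(-1)^{i+j}}{\det A}A_{ij}$, and the Hadamard product with $A$ gives $m_{ij}=a_{ij}(A^{-T})_{ij}=\tfrac{(-1)^{i+j}}{\det A}a_{ij}A_{ij}$. Part~2 is then immediate: the diagonal entries have $i+j=2i$ even, so $m_{ii}=\tfrac{1}{\det A}a_{ii}A_{ii}$, and summing over $i$ yields $\mathrm{Tr}(\mathcal{C}(A))=\tfrac{1}{\det A}\sum_{i=1}^{n}a_{ii}A_{ii}$.

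For part~3, fix a row index $i$ and sum the formula of part~1 over $j$: $\sum_{j=1}^{n}m_{ij}=\tfrac{1}{\det A}\sum_{j=1}^{n}(-1)^{i+j}a_{ij}A_{ij}$, and the inner sum is precisely the Laplace (cofactor) expansion of $\det A$ along row $i$, hence equals $\det A$, so $\sum_{j}m_{ij}=1$ for every $i$. This says exactly $\mathcal{C}(A)\,v_1=v_1$ with $v_1=(1,\dots,1)^{T}=\sum_{k=1}^{n}e_k\neq 0$, i.e. $(1,v_1)\in\Lambda(\mathcal{C}(A))\times\mathcal{V}(\mathcal{C}(A))$. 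For part~4, since $K$ is a number field we have $1\in\mathbb{Q}\subseteq K$, so by part~3 the characteristic polynomial factors over $K$ as $p_{\mathcal{C}(A)}(\lambda)=(\lambda-1)q(\lambda)$ with $\deg q=n-1$; ordering the eigenvalues so that $\lambda_1=1$, the splitting field $L=K[\lambda_1,\dots,\lambda_n]=K[\lambda_2,\dots,\lambda_n]$ is generated over $K$ by the roots of $q$. Any $\sigma\in\mathrm{Gal}(L/K)$ fixes $K$ and permutes the (at most $n-1$) distinct roots of $q$, and is determined by that permutation since $L$ is generated by them, so the restriction map embeds $\mathrm{Gal}(L/K)$ into the symmetric group on those roots; thus $\mathrm{Gal}(L/K)\leq G$ with $G\cong S_{n-1}$.

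Part~5 is the one genuinely structural point. With the $\delta$'s of the reversing definition~\eqref{drm} one checks that $M_{\mathcal{R}_c}=M_{\mathcal{R}_r}=:J$ is the exchange (anti-identity) matrix, so $\mathcal{R}(X)=JXJ$ for every matrix $X$, and $J=J^{T}=J^{-1}$. It follows that $\mathcal{R}$ commutes with transposition, $(\mathcal{R}(X))^{T}=JX^{T}J=\mathcal{R}(X^{T})$, and with inversion, $(\mathcal{R}(X))^{-1}=JX^{-1}J=\mathcal{R}(X^{-1})$ (here $\det\mathcal{R}(A)=(\det J)^{2}\det A=\det A\neq 0$, so $\mathcal{C}(\mathcal{R}(A))$ is well defined); combining these, $(\mathcal{R}(A))^{-T}=\mathcal{R}(A^{-T})$. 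Therefore
\[
\mathcal{C}(\mathcal{R}(A))=\mathcal{R}(A)\circ(\mathcal{R}(A))^{-T}=\mathcal{R}(A)\circ\mathcal{R}(A^{-T}),
\]
and since Lemma~\ref{lp} says $\mathcal{R}$ is a homomorphism for the Hadamard product, $\mathcal{R}(A)\circ\mathcal{R}(A^{-T})=\mathcal{R}(A\circ A^{-T})=\mathcal{R}(\mathcal{C}(A))$, giving the claimed identity. The only step I expect to require real care is the bookkeeping in part~5 --- verifying that $M_{\mathcal{R}_c}$ and $M_{\mathcal{R}_r}$ really both coincide with $J$, and that $\mathcal{R}$ commutes with $(\cdot)^{T}$ and $(\cdot)^{-1}$ --- while parts 1--4 reduce to routine cofactor algebra and a textbook Galois argument.
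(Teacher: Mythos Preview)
Your proof is correct and follows essentially the same route as the paper: parts 1--4 match the paper's argument (adjugate/cofactor formula, the row-sum-equals-one computation via Laplace expansion for part~3, and the factorization $p_{\mathcal{C}(A)}(\lambda)=(\lambda-1)q(\lambda)$ for part~4), and part~5 is deduced from Lemma~\ref{lp} just as in the paper. If anything, your part~5 is more complete: the paper simply asserts the identity ``follows directly from Lemma~\ref{lp}'', whereas you correctly supply the extra ingredient that $\mathcal{R}$ commutes with $(\cdot)^{-T}$ (via $M_{\mathcal{R}_c}=M_{\mathcal{R}_r}=J$ with $J=J^{T}=J^{-1}$), without which the Hadamard-homomorphism property alone would not close the argument.
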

\begin{proof}
We proceed according to each item.
\begin{enumerate}
    \item Due to $A\in \mathrm{GL}_n(K)$, $A$ is non-singular and formula \eqref{eq:dcm} is valid. Now, by direct application of basic properties of transpose matrix and inverse matrix we obtain the result.
    \item It follows by direct application of item 1.
    \item  By induction and basic properties of determinants we have $P_{\mathcal{C}(A)}(1)=0$ and therefore $\lambda_1=1$ is an eigenvalue of $\mathcal{C}(A)$. Due to vector $v_1$ corresponds to $(1,1,\ldots,1)^T\in K^n$, we obtain $\mathcal{C}(A)\cdot v_1=v_1$ and therefore $v_1$ is an eigenvector of $\mathcal{C}(A)$.
    \item By item 3 we have $p_{\mathcal{C}(A)}(1)=0$, thus $p_{\mathcal{C}(A)}(\lambda)=(\lambda-1)q_{n-1}(\lambda)$, where $q_{n-1}$ is a polynomial of degree $n-1$. Therefore the extension $L$ is obtained as $K[\lambda_2,\ldots,\lambda_{n}]$, which implies that $\mathrm{Gal}(L/K)$ is a subgroup of $S_{n-1}$.
    \item It follows directly from Lemma \ref{lp}. Thus, Reversing of the combined matrix of $A$ is the combined matrix of Reversing of $A$.
\end{enumerate} 
\end{proof}
\subsection{Combined matrices over proper subgroups of General Linear Group}
Here we consider combined matrices over some proper subgroups of $\mathrm{GL}_n(K)$ such as $\mathrm{SL}_n(K)$, triangular matrices, orthogonal matrices, among others. We start with the following Lemma.
\begin{lemma}\label{lp2}
Let $(T_n^+,\cdot)$ and $(T_n^-,\cdot)$ be the groups of $n\times n$ upper triangular and lower triangular matrices respectively with the usual product of matrices. Then $T_n^+\circ T_n^-=\mathrm{diag}(T_n^+)\cdot \mathrm{diag}(T_n^-)$, where $\circ$ is the Hadamard product of matrices and $\mathrm{diag}$ denotes the diagonal matrix.
\end{lemma}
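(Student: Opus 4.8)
The plan is to reduce the identity to an elementary entrywise computation, after clarifying that the asserted equality is an equality of \emph{sets} of matrices: $T_n^+\circ T_n^-$ denotes $\{U\circ L:\ U\in T_n^+,\ L\in T_n^-\}$, and likewise $\mathrm{diag}(T_n^+)\cdot\mathrm{diag}(T_n^-)=\{\mathrm{diag}(U)\cdot\mathrm{diag}(L):\ U\in T_n^+,\ L\in T_n^-\}$. The core observation, from which everything follows, is a pointwise statement: for any fixed $U=[u_{ij}]\in T_n^+$ and $L=[l_{ij}]\in T_n^-$ one has $U\circ L=\mathrm{diag}(U)\cdot\mathrm{diag}(L)$.

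To prove this pointwise statement, I would first write $U\circ L=[u_{ij}l_{ij}]_{n\times n}$ by definition of the Hadamard product. If $i>j$ then $u_{ij}=0$ because $U$ is upper triangular; if $i<j$ then $l_{ij}=0$ because $L$ is lower triangular; hence $u_{ij}l_{ij}=0$ whenever $i\neq j$, so $U\circ L$ is the diagonal matrix with $(i,i)$ entry $u_{ii}l_{ii}$. On the other hand $\mathrm{diag}(U)$ and $\mathrm{diag}(L)$ are the diagonal matrices $\mathrm{diag}(u_{11},\dots,u_{nn})$ and $\mathrm{diag}(l_{11},\dots,l_{nn})$, and the ordinary product of two diagonal matrices is diagonal with $(i,i)$ entry the product of the corresponding entries, namely $u_{ii}l_{ii}$. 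The two matrices therefore agree entry by entry.

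The set equality then follows in two directions. Taking the union of the pointwise identity over all $U\in T_n^+$ and $L\in T_n^-$ gives $T_n^+\circ T_n^-\subseteq\mathrm{diag}(T_n^+)\cdot\mathrm{diag}(T_n^-)$, and it already shows each side consists of diagonal matrices with nonzero diagonal, since invertibility of $U$ and $L$ forces $u_{ii},l_{ii}\neq 0$. For the reverse inclusion I would use that $I_n$ is simultaneously upper and lower triangular: given $\mathrm{diag}(U)\cdot\mathrm{diag}(L)$, applying the pointwise identity to the (triangular) matrices $\mathrm{diag}(U)$ and $\mathrm{diag}(L)$ shows it equals $\mathrm{diag}(U)\circ\mathrm{diag}(L)\in T_n^+\circ T_n^-$; alternatively one observes that both sides equal the group $D_n$ of invertible $n\times n$ diagonal matrices, since $\mathrm{diag}(T_n^{\pm})=D_n$, $D_n\cdot D_n=D_n$, and every $D\in D_n$ equals $D\circ I_n\in T_n^+\circ T_n^-$.

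There is essentially no serious obstacle here; the only points requiring care are bookkeeping ones: being explicit that the claim is about sets rather than individual matrices, and tracking that non-singularity of the triangular factors keeps all diagonal entries nonzero, so that $\mathrm{diag}(T_n^{\pm})$ is genuinely the full diagonal group $D_n$ and the identity is consistent with $T_n^{\pm}$ being groups. If instead the lemma is read as the pointwise identity $U\circ L=\mathrm{diag}(U)\cdot\mathrm{diag}(L)$, the proof collapses to the single entrywise computation above.
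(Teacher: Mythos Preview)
Your proof is correct and follows essentially the same approach as the paper: both reduce to the entrywise observation that the Hadamard product of an upper and a lower triangular matrix kills all off-diagonal entries, leaving the product of the diagonals. The paper in fact treats the lemma as the pointwise identity (writing $T_n^+=[a_{ij}]$, $T_n^-=[b_{ij}]$ as individual matrices and computing $[a_{ij}b_{ij}]=[a_{ii}b_{ii}]$), so your second paragraph already matches its argument; your set-level bookkeeping is additional care not present in the original.
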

\begin{proof}
Consider $T_n^+=[a_{ij}]_{n\times n}$ and $T_n^-=[b_{ij}]_{n\times n}$ such that $a_{ij}=0$ for all $i<j$ and $b_{ij}=0$ for all $i>j$. Thus, $T_n^+\circ T_n^-=[a_{ij}\cdot b_{ij}]_{n\times n}=[a_{ii}\cdot b_{ii}]_{n\times n}=[a_{ii}]_{n\times n}\circ [b_{ii}]_{n\times n}$ which lead us to the result. 
\end{proof}
\begin{proposition}
 The following statements hold.
\begin{enumerate} 
\item $\mathcal{C}(T_n)=I_n$
\item $\Lambda(\mathcal{C}(T_n))=\{1\}$.
\item $\mathcal{C}(AB)=\mathcal{C}(A)\mathcal{C}(B)$, being $A,B$ triangular matrices.
\end{enumerate}
\end{proposition}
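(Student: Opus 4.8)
The plan is to obtain items 2 and 3 as immediate corollaries of item 1, so that the only real work lies in item 1. Throughout I take $T_n$ to be a nonsingular triangular matrix; by symmetry it suffices to treat the upper triangular case, writing $T_n=[a_{ij}]_{n\times n}$ with $a_{ij}=0$ for $i>j$ and $a_{ii}\neq 0$ for all $i$.

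For item 1, the first step is to record the two standard structural facts I will use. First, the inverse of a nonsingular upper triangular matrix is again upper triangular, hence $T_n^{-T}=(T_n^{-1})^{T}$ is lower triangular. Second, the diagonal entries of $T_n^{-1}$ are $a_{11}^{-1},\ldots,a_{nn}^{-1}$: indeed, by Cramer's rule the $(i,i)$ entry of $T_n^{-1}$ equals the $(i,i)$ minor of $T_n$ divided by $\det T_n$, and since deleting row $i$ and column $i$ from an upper triangular matrix leaves an upper triangular matrix with determinant $\prod_{k\neq i}a_{kk}$, while $\det T_n=\prod_k a_{kk}$, this quotient is $a_{ii}^{-1}$ (alternatively, compare diagonals in $T_nT_n^{-1}=I_n$). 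Thus $\mathrm{diag}(T_n)=[a_{ii}]_{n\times n}$ and $\mathrm{diag}(T_n^{-T})=[a_{ii}^{-1}]_{n\times n}$. Now I apply Lemma \ref{lp2} with the upper triangular factor $T_n$ and the lower triangular factor $T_n^{-T}$: it gives $\mathcal{C}(T_n)=T_n\circ T_n^{-T}=\mathrm{diag}(T_n)\cdot\mathrm{diag}(T_n^{-T})=[a_{ii}\,a_{ii}^{-1}]_{n\times n}=I_n$, which is item 1.

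Item 2 is then immediate: the characteristic polynomial of $\mathcal{C}(T_n)=I_n$ is $(\lambda-1)^n$, so $\Lambda(\mathcal{C}(T_n))=\{1\}$. For item 3, a product of two triangular matrices of the same orientation is again triangular of that orientation, so applying item 1 to $AB$, to $A$, and to $B$ yields $\mathcal{C}(AB)=I_n=I_nI_n=\mathcal{C}(A)\mathcal{C}(B)$; here one should read ``triangular'' as ``triangular of a common orientation'', since for mixed orientations $AB$ need not be triangular and the statement would fail to make sense as phrased.

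Because the substance of item 1 is Lemma \ref{lp2}, I do not expect a genuine obstacle anywhere. The only point meriting more than a line is the reciprocal-diagonal claim for $T_n^{-1}$, which I would dispatch by the minor/determinant computation sketched above; everything else is a direct application of item 1.
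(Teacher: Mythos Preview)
Your proof is correct and follows essentially the same route as the paper: item 1 via Lemma~\ref{lp2} together with the structural facts about triangular matrices and their inverses, item 2 as an immediate consequence, and item 3 by noting that $\mathcal{C}$ sends every triangular matrix to $I_n$. Your write-up is in fact more explicit than the paper's, particularly in computing the diagonal of $T_n^{-1}$ and in flagging the ``same orientation'' caveat for item 3.
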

\begin{proof}
We proceed according to each item.
\begin{enumerate}
    \item By definition of combined matrix of $A$ we have $\mathcal{C}(A)=A\circ A^{-T}$. Now, inverse of upper (resp. lower) triangular matrix is upper (resp. lower) triangular matrix. In the same way, transpose of upper (resp. lower) triangular matrix is a lower (resp. upper) triangular matrix and the determinant of any triangular (lower and upper) is the product of the diagonal entries. Thus, by Lemma \ref{lp2} we have that $\mathcal{C}(T_n^{\pm})$ are diagonal matrices and $T_n^{\pm} \circ (T_n^{\pm})^{-T}$ and because we divide by the product of the diagonal entries, we obtain $\mathcal{C}(T_n^{\pm})=I_n$.
    \item It follows directly from item 1 because the identity matrix has 1 as eigenvalue of algebraic multiplicity $n$.
    \item It follows due to the $\ker (\mathcal{C})$ for triangular matrices is the identity.
\end{enumerate}
\end{proof}
The following result involves the \emph{Orthogonal Group} over $K$, that is, the group of matrices with entries in $K$ such that the product of a matrix with its transpose is the identity. We observe that Orthogonal Group is a subgroup of $\mathrm{SL}_n(K)$ 
\begin{proposition}
The matrix $A$ is an orthogonal matrix if and only if $\mathcal{C}(A)=A\circ A$. %Moreover, $\mathcal{C}(A)$ is symmetric.
\end{proposition}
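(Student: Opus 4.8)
The plan is to handle the two implications separately; the forward one is routine and all the content sits in the converse. First I would dispose of ``orthogonal $\Rightarrow\mathcal C(A)=A\circ A$'': if $A^{T}A=I_n$ then $A^{-1}=A^{T}$, hence $A^{-T}=(A^{-1})^{T}=A$, and therefore $\mathcal C(A)=A\circ A^{-T}=A\circ A$. This uses nothing beyond the definition of $\mathcal C$.

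For the converse the natural move is to pass to entries using item~1 of the Theorem: $\mathcal C(A)_{ij}=a_{ij}(A^{-T})_{ij}=\frac{(-1)^{i+j}}{\det A}\,a_{ij}A_{ij}$, so $\mathcal C(A)=A\circ A$ is equivalent to the system
\[
a_{ij}\bigl((A^{-T})_{ij}-a_{ij}\bigr)=0,\qquad 1\le i,j\le n .
\]
At every position where $a_{ij}\neq 0$ we may cancel and obtain $(A^{-T})_{ij}=a_{ij}$, so if $A$ has no zero entry we get $A^{-T}=A$, i.e. $A^{-1}=A^{T}$, and we are finished. The hard part — indeed the whole difficulty — is the positions where $a_{ij}=0$, at which the equation is vacuous: one would have to prove separately that $(A^{-T})_{ij}=0$ there, equivalently that the minor $A_{ij}$ vanishes whenever $a_{ij}=0$.

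I expect that step to be not merely hard but impossible in general, since the Hadamard product is not cancellative and the information at the zero positions is genuinely lost; so rather than push the minor computation I would look for a counterexample, and one is at hand. Let $M=\left(\begin{smallmatrix}1&1&0\\0&1&1\\1&0&1\end{smallmatrix}\right)$ and $A=\tfrac1{\sqrt2}M\in\mathrm{GL}_3\bigl(\mathbb Q(\sqrt2)\bigr)$. Since $\mathcal C(cB)=\mathcal C(B)$ for every nonzero scalar $c$ (immediate from the definition, as the scalars cancel in the Hadamard product), a short computation from item~1 of the Theorem gives $\mathcal C(A)=\mathcal C(M)=\tfrac12 M$, which is exactly $A\circ A=\tfrac12(M\circ M)=\tfrac12 M$ (here $M\circ M=M$ because $M$ is a $0/1$ matrix); nevertheless $AA^{T}=\tfrac12\left(\begin{smallmatrix}2&1&1\\1&2&1\\1&1&2\end{smallmatrix}\right)\neq I_3$, so $A$ is not orthogonal. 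Thus the stated equivalence fails.

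Consequently what I would actually record is the corrected statement: $A$ orthogonal always implies $\mathcal C(A)=A\circ A$, and the converse holds under the extra hypothesis that $A$ has no zero entry (by the one-line cancellation above), or more generally whenever $A$ and $A^{-T}$ have the same zero pattern. I would also remark that the full equivalence does survive the $n=2$ analysis carried out in the paper, because there a vanishing entry of $\left(\begin{smallmatrix}a&b\\c&d\end{smallmatrix}\right)$ forces further entries to vanish and pins $A$ down to a rotation or a reflection; the failure is genuinely an $n\ge 3$ phenomenon.
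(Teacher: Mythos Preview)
Your forward implication is exactly the paper's argument: from $A^{-1}=A^{T}$ one gets $A^{-T}=A$ and hence $\mathcal C(A)=A\circ A$. That single line is in fact the \emph{entire} proof the paper gives; it never addresses the converse at all, despite the ``if and only if'' in the statement.

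Your analysis of the converse is correct and goes well beyond the paper. The counterexample with $M=\left(\begin{smallmatrix}1&1&0\\0&1&1\\1&0&1\end{smallmatrix}\right)$ and $A=\tfrac{1}{\sqrt2}M$ checks out: $\det M=2$, $M^{-T}=\tfrac12\left(\begin{smallmatrix}1&1&-1\\-1&1&1\\1&-1&1\end{smallmatrix}\right)$, so $\mathcal C(A)=\mathcal C(M)=\tfrac12 M=A\circ A$, yet $AA^{T}=\tfrac12\left(\begin{smallmatrix}2&1&1\\1&2&1\\1&1&2\end{smallmatrix}\right)\ne I_3$. Thus the biconditional as stated is false for $n\ge 3$, and the paper's proposition is in error. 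Your salvage --- that the converse holds whenever $A$ has no zero entry (or, more generally, when $A$ and $A^{-T}$ share the same zero pattern) --- is the right repair, and your remark that the full equivalence does survive when $n=2$ is also correct: the entrywise equations $a^2=d^2$, $b^2=c^2$, $a^2+b^2=1$ coming from $\mathcal C(A)=A\circ A$ force $A$ to be a rotation or reflection in that case.

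In short: you reproduce everything the paper proves, and you correctly diagnose and fix a gap the paper does not acknowledge.
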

\begin{proof}
Due to $A^{-1}=A^T$ we have that $A^{-T}=A$ and therefore $\mathcal{C}(A)=A\circ A$.
\end{proof}
\noindent \begin{example}
Consider the following matrix $$A=-\frac{1}{3}\begin{pmatrix}2& -2& 1\\1& 2& 2\\ 2& 1& -2\end{pmatrix},\quad A^{-1}=A^T=-\frac{1}{3}\begin{pmatrix}2& 1& 2\\-2& 2& 1\\ 1& 2& -2\end{pmatrix},$$
the combined matriz of $A$ is $$\mathcal{C}(A)=\frac{1}{9}\begin{pmatrix}4& 4& 1\\1& 4& 4\\ 4& 1& 4\end{pmatrix}.$$
\end{example}
%The following result is a consequence of previous proposition.
%\begin{proposition}
%Let $A=[a_{ij}]_{n\times n}$ be a symmetric non-singular matrix with positive real entries. There exists a matrix $B=[b_{ij}]_{n\times n}$ belonging to the Orthogonal Group satisfying 
%$$A=\mathcal{C}(B),\quad b_{ij}=\pm \sqrt{\frac{a_{ij}}{\det A}}$$
%\end{proposition}
%\begin{remark}
%Concerning the previous proposition, we recall that orthogonal matrices are not necessarily symmetric. For example, considering $$A=\begin{pmatrix}1& 0& 1\\3& -1& 2\\ 5& -1& 3\end{pmatrix},\quad \det A= 1,\quad A^{-1}=\begin{pmatrix}-1& -1& 1\\1& -2& 1\\ 2& 1& -1\end{pmatrix}$$ 
%,\end{remark}
%\subsection{Combined matrices over proper subgroups of $\mathrm{GL}_n(K)$}

%\subsection{Results of $\mathcal{C}(A)$ for particular cases}
%In this section we consider algebraic aspects of characteristic polynomials of $n\times n$ combined matrices given in Eq. \eqref{eq:dcm} being $2\leq n\leq $. 

\section{Final remarks}

In this paper, we analyzed various algebraic aspects of combined matrices. We began by relating the Hadamard product with the reversing of matrices in the General Linear Group. Subsequently, we presented results concerning the spectrum of combined matrices, explicitly providing some elements of the spectrum and the set of eigenvectors. Specifically, we found that 1 is an eigenvalue of any combined matrix, and its corresponding eigenvector is the sum of the elements of the canonical basis. Some results were provided for any \(n \times n\) matrix, and we examined the case of \(2 \times 2\) matrices and specific cases for \(3 \times 3\) matrices. The Galois group is the identity for the characteristic polynomial of any \(2 \times 2\) combined matrix.

For \(n > 3\), the eigenvectors will depend on the entries of the original matrix \(A\), except for the eigenvector corresponding to the eigenvalue 1. Further analysis for \(n > 3\) is in progress.

Some aspects to be considered as extensions of these results are as follows:

\begin{itemize}
    \item Obtaining algebraic characterization for \(n \times n\) combined matrices, \(n > 3\).
    \item Relations between quantum mechanics and combined matrices (see \cite{amw11}).
    \item Relations between Pasting and Reversing with combined matrices (see \cite{acr13,acr10,acma16}.
    \item Relations between differential Galois theory and combined matrices (see \cite{a10,amw11}).
    \item Relation of tensor analysis and combined matrices.
\end{itemize}

\bibliographystyle{alpha}  
%\bibliography{ams9final}  %%% Remove comment to use the external .bib file (using bibtex).
%%% and comment out the ``thebibliography'' section.

%%% Comment out this section when you \bibliography{references} is enabled.

\end{document}